\DeclareMathOperator{\sgn}{sgn}
\newtheorem{theorem}{Theorem}
\begin{document}

\title[A mass-action system with a vertical Andronov--Hopf bifurcation]{The smallest bimolecular mass-action system with a vertical Andronov--Hopf bifurcation}

\author[M. Banaji]{Murad Banaji}

\author[B. Boros]{Bal\'azs Boros}
\thanks{BB’s work was supported by the Austrian Science Fund (FWF), project P32532.}

\author[J. Hofbauer]{Josef Hofbauer}
\thanks{Email: m.banaji@mdx.ac.uk, balazs.boros@univie.ac.at, josef.hofbauer@univie.ac.at}

\subjclass[2020]{Primary 34A05, 34C25, 34C45}

\date{}

\dedicatory{}

\begin{abstract}
We present a three-dimensional differential equation, which robustly displays a degenerate Andronov--Hopf bifurcation of infinite codimension, leading to a center, i.e., an invariant two-dimensional surface that is filled with periodic orbits surrounding an equilibrium. The system arises from a three-species bimolecular chemical reaction network consisting of four reactions. In fact, it is the only such mass-action system that admits a center via an Andronov--Hopf bifurcation.
\end{abstract}

\maketitle

\section{Summary of the main results} \label{sec:summary}

In order to admit an Andronov--Hopf bifurcation, the underlying chemical reaction network of a bimolecular mass-action system must have at least three species and at least four reactions. It has recently been shown that there are exactly $138$ nonisomorphic three-species four-reaction bimolecular reaction networks, whose associated mass-action systems admit Andronov--Hopf bifurcation \cite{banaji:boros:2022}. These networks fall into $87$ dynamically nonequivalent classes. Of these classes, $86$ admit nondegenerate Andronov--Hopf bifurcation for almost all parameter values on the bifurcation set, leading to isolated limit cycles. In the remaining class, however, the Andronov--Hopf bifurcation can only be degenerate. A representative of this exceptional class is 
\begin{align}\label{eq:ntw}
\begin{aligned}
\begin{tikzpicture}[scale=2]

\node (P1) at (0,0)    {$\mathsf{Z}+\mathsf{X}$};
\node (P2) at (1,0)    {$2\mathsf{X}$};
\node (P3) at (0,-0.4) {$\mathsf{X}+\mathsf{Y}$};
\node (P4) at (1,-0.4) {$2\mathsf{Y}$};
\node (P5) at (0,-0.8)   {$\mathsf{Y} + \mathsf{Z}$};
\node (P6) at (1,-0.8)   {$\mathsf{0}$};
\node (P7) at (2,-0.8)   {$2\mathsf{Z}$};

\draw[arrows={-stealth},thick] (P1) to node[above] {$\kappa_1$} (P2);
\draw[arrows={-stealth},thick] (P3) to node[above] {$\kappa_2$} (P4);
\draw[arrows={-stealth},thick] (P5) to node[above] {$\kappa_3$} (P6);
\draw[arrows={-stealth},thick] (P6) to node[above] {$\kappa_4$} (P7);


\end{tikzpicture}
\end{aligned}
\end{align}
giving rise to the mass-action differential equation
\begin{align}\label{eq:ode}
\begin{split}
\dot{x} &= x(\kappa_1 z - \kappa_2 y), \\
\dot{y} &= y(\kappa_2 x - \kappa_3 z), \\
\dot{z} &= z(-\kappa_3 y - \kappa_1 x) + 2\kappa_4
\end{split}
\end{align}
with state space ${\mathbb R}^3_{\geq 0}$,
where $\kappa_1$, $\kappa_2$, $\kappa_3$, $\kappa_4$ are positive parameters, called the reaction rate constants. (The other member of the exceptional class is obtained from \eqref{eq:ntw} by replacing the reaction $\mathsf{0} \to 2\mathsf{Z}$ by $\mathsf{0} \to \mathsf{Z}$.) The question left open in \cite{banaji:boros:2022} concerns the behaviour of system \eqref{eq:ode}. In \Cref{sec:analysis}, we prove that whenever the Jacobian matrix at the unique positive equilibrium has a pair of purely imaginary eigenvalues, the equilibrium is a center, i.e., there is a one parameter family of periodic orbits that fill the two-dimensional center manifold. In particular,  Andronov--Hopf bifurcations in system \eqref{eq:ode} are always \emph{vertical}, i.e., all the periodic orbits occur simultaneously at the critical value of the bifurcation parameter. Additionally, we prove that every positive solution converges either to one of these periodic orbits or to the unique positive equilibrium. Further, we show that the global center manifold is analytic and discuss how its closure intersects the boundary of the state space $\mathbb{R}^3_{\geq0}$.
\section{Vertical Andronov--Hopf bifurcations in mass--action systems}

There are two well-known small reaction networks that exhibit oscillations. The Lotka reactions \cite{lotka:1920} (left) and the Ivanova reactions \cite[page 630]{volpert:hudjaev:1985} (right) along with their associated mass-action differential equations are
\begin{center}
\begin{tikzpicture}[scale=1.8]

\node (P1) at (0,0)    {$\mathsf{X}$};
\node (P2) at (1,0)    {$2\mathsf{X}$};
\node (P3) at (0,-0.4) {$\mathsf{X}+\mathsf{Y}$};
\node (P4) at (1,-0.4) {$2\mathsf{Y}$};
\node (P5) at (0,-0.8)   {$\mathsf{Y}$};
\node (P6) at (1,-0.8)   {$\mathsf{0}$};

\draw[arrows={-stealth},thick] (P1) to node[above] {$\kappa_1$} (P2);
\draw[arrows={-stealth},thick] (P3) to node[above] {$\kappa_2$} (P4);
\draw[arrows={-stealth},thick] (P5) to node[above] {$\kappa_3$} (P6);

\node at (2,-0.4)
{$\begin{aligned}
\dot{x} &= x(\kappa_1 - \kappa_2 y) \\
\dot{y} &= y(\kappa_2 x - \kappa_3)
\end{aligned}$};

\begin{scope}[shift={(3.5,0)}]
\node (P1) at (0,0)    {$\mathsf{Z}+\mathsf{X}$};
\node (P2) at (1,0)    {$2\mathsf{X}$};
\node (P3) at (0,-0.4) {$\mathsf{X}+\mathsf{Y}$};
\node (P4) at (1,-0.4) {$2\mathsf{Y}$};
\node (P5) at (0,-0.8)   {$\mathsf{Y} + \mathsf{Z}$};
\node (P6) at (1,-0.8)   {$2\mathsf{Z}$};

\draw[arrows={-stealth},thick] (P1) to node[above] {$\kappa_1$} (P2);
\draw[arrows={-stealth},thick] (P3) to node[above] {$\kappa_2$} (P4);
\draw[arrows={-stealth},thick] (P5) to node[above] {$\kappa_3$} (P6);

\node at (2,-0.4)
{$\begin{aligned}
\dot{x} &= x(\kappa_1 z - \kappa_2 y) \\
\dot{y} &= y(\kappa_2 x - \kappa_3 z) \\
\dot{z} &= z(\kappa_3 y - \kappa_1 x)
\end{aligned}$};
\end{scope}

\draw[-] (3,-1.1) to (3,0.1);

\end{tikzpicture}
\end{center}
Both the Lotka and the Ivanova networks are \emph{bimolecular} (i.e., the molecularity of every reactant and product is at most two) and have rank two (i.e., the span of the vectors of the net changes of the species is two-dimensional). For the Lotka, the unique positive equilibrium is surrounded by periodic orbits, the level sets of $x^{\kappa_3}y^{\kappa_1}e^{-\kappa_2(x+y)}$. For the Ivanova, the triangle $\Delta_c=\{(x,y,z) \in \mathbb{R}_+^3 \colon x+y+z=c\}$ is invariant for any $c>0$, and the unique positive equilibrium in $\Delta_c$ is surrounded by periodic orbits, the level sets of $x^{\kappa_3}y^{\kappa_1}z^{\kappa_2}$. For both the Lotka and the Ivanova systems, the described behaviour holds for all $\kappa_1, \kappa_2, \kappa_3 >0$, and hence, these systems admit no bifurcation.

By \cite[Theorem 4.1]{boros:hofbauer:2022a}, the Lotka and the Ivanova systems are the only rank-two bimolecular mass-action systems with periodic orbits. Thus, for an Andronov--Hopf bifurcation to occur in a bimolecular mass-action system, its rank must be at least three, and hence, it must have at least three species. Moreover, by \cite[Lemma 2.3]{banaji:boros:2022}, it must have at least four reactions.

We turn to the question of when mass-action systems admit vertical Andronov--Hopf bifurcations. If we do not require bimolecularity then these can occur in rank-two networks. For example, by adding the reactions $\mathsf{X} \stackrel{\kappa_5} {\longleftarrow} 2 \mathsf{X} \stackrel{\kappa_4}{\longrightarrow} 3\mathsf{X}$ to the Lotka network above, the resulting mass-action system exhibits a vertical Andronov--Hopf bifurcation: for $\kappa_4$ slightly smaller than $\kappa_5$ the positive equilibrium is asymptotically stable, for $\kappa_4$ slightly larger than $\kappa_5$ it is repelling, while for $\kappa_4 = \kappa_5$ it is a center.

Focussing on bimolecular networks, we can construct rank-three networks with vertical Andronov--Hopf bifurcation. For instance, by inserting some intermediate steps into the Ivanova reactions and choosing the rate constants appropriately, we obtain the following rank-three bimolecular mass-action system with cyclic symmetry:
\begin{align}\label{eq:9reactions}
\begin{aligned}
\begin{tikzpicture}[scale=2]

\node (P1) at (0,0)    {$\mathsf{Z}+\mathsf{X}$};
\node (P2) at (1,0)    {$\mathsf{X}$};
\node (P3) at (2,0)    {$2\mathsf{X}$};
\node (P4) at (0,-1/2) {$\mathsf{X}+\mathsf{Y}$};
\node (P5) at (1,-1/2) {$\mathsf{Y}$};
\node (P6) at (2,-1/2) {$2\mathsf{Y}$};
\node (P7) at (0,-1)   {$\mathsf{Y} + \mathsf{Z}$};
\node (P8) at (1,-1)   {$\mathsf{Z}$};
\node (P9) at (2,-1)   {$2\mathsf{Z}$};

\draw[arrows={-stealth},thick] (P1) to node[above] {$\alpha$} (P2);
\draw[arrows={-stealth},thick,transform canvas={yshift=2pt}] (P2) to node[above] {$\gamma$} (P3);
\draw[arrows={-stealth},thick,transform canvas={yshift=-2pt}] (P3) to node[below] {$\beta$} (P2);

\draw[arrows={-stealth},thick] (P4) to node[above] {$\alpha$} (P5);
\draw[arrows={-stealth},thick,transform canvas={yshift=2pt}] (P5) to node[above] {$\gamma$} (P6);
\draw[arrows={-stealth},thick,transform canvas={yshift=-2pt}] (P6) to node[below] {$\beta$} (P5);

\draw[arrows={-stealth},thick] (P7) to node[above] {$\alpha$} (P8);
\draw[arrows={-stealth},thick,transform canvas={yshift=2pt}] (P8) to node[above] {$\gamma$} (P9);
\draw[arrows={-stealth},thick,transform canvas={yshift=-2pt}] (P9) to node[below] {$\beta$} (P8);

\node at (3.5,-1/2) {$\begin{aligned} \dot{x} & = x(\gamma-\beta x - \alpha y) \\
\dot{y} & = y(\gamma-\beta y - \alpha z) \\
\dot{z} & = z(\gamma-\beta z - \alpha x) 
\end{aligned}$};

\end{tikzpicture}
\end{aligned}
\end{align}
This system exhibits vertical Andronov--Hopf bifurcation: the unique positive equilibrium is asymptotically stable for $\alpha<2\beta$, it is unstable for $\alpha>2\beta$, while it is a center for $\alpha=2\beta$. More precisely, for $\alpha=2\beta$ the triangle $\Delta=\{(x,y,z)\in\mathbb{R}^3_+\colon x+y+z=\frac{\gamma}{\beta}\}$ is invariant, and on $\Delta$ the equilibrium $(x^*,y^*,z^*)=\left(\frac{\gamma}{3\beta},\frac{\gamma}{3\beta},\frac{\gamma}{3\beta}\right)$ is surrounded by periodic orbits. On these curves, the function $xyz$ is constant, as in the Ivanova system with equal rate constants. In fact, the function $\frac{xyz}{(x+y+z)^3}$ is a constant of motion in $\mathbb{R}^3_+$, the stable manifold of $(x^*,y^*,z^*)$ is the line $x=y=z$ in $\mathbb{R}^3_+$, while the $\omega$-limit set of any positive initial point outside this line is one of the periodic orbits in $\Delta$, see \cite{may:leonard:1975} or \cite[Section 5.5]{hofbauer:sigmund:1998}.

In the next section, we prove that the mass-action system \eqref{eq:ode} (which is obtained from the Ivanova network by adding a single intermediate step, and has only four reactions) also admits a vertical Andronov--Hopf bifurcation, even though it has no obvious symmetries. By \cite[Theorems 5.2 and 7.1]{banaji:boros:2022}, it is the \emph{only} three-species four-reaction bimolecular mass-action system that exhibits a vertical Andronov--Hopf bifurcation.
\section{Analysis}
\label{sec:analysis}

In this section we analyse the mass-action system \eqref{eq:ode}. In particular, we show that it undergoes a vertical Andronov--Hopf bifurcation at $\kappa_1 = \kappa_2 + \kappa_3$. A description of the dynamics in the critical case is provided in \Cref{thm:main}, while some information on the shape of the global center manifold is revealed in \Cref{thm:intersect_boundary}.

By a short calculation, system \eqref{eq:ode} has a unique positive equilibrium, given by
\begin{align*}
(x^*,y^*,z^*)=\left(\sqrt{\frac{\kappa_3 \kappa_4}{\kappa_1\kappa_2}}, \sqrt{\frac{\kappa_1 \kappa_4}{\kappa_2\kappa_3}}, \sqrt{\frac{\kappa_2 \kappa_4}{\kappa_1\kappa_3}}\right).
\end{align*}
Denoting by $J$ the Jacobian matrix at $(x^*,y^*,z^*)$, one finds that the characteristic polynomial of $J$ equals $\lambda^3 + a_2 \lambda^2 + a_1 \lambda + a_0$ with
\begin{align*}
a_2 = 2\sqrt{\frac{\kappa_1 \kappa_3 \kappa_4}{\kappa_2}},\quad
a_1 = (\kappa_1 + \kappa_2 - \kappa_3)\kappa_4,\quad
a_0 = 4\sqrt{\kappa_1 \kappa_2 \kappa_3 \kappa_4^3}.
\end{align*}
Since $a_0>0$, one eigenvalue is a negative real number. Further, observe that $a_2>0$ and $\sgn(a_2a_1-a_0) = \sgn(\kappa_1 - \kappa_2 - \kappa_3)$. Therefore, by the Routh--Hurwitz criterion,
\begin{enumerate}[(a)]
\item if $\kappa_1 > \kappa_2 + \kappa_3$ then all three eigenvalues of $J$ have negative real parts (and thus, the positive equilibrium is asymptotically stable),
\item if $\kappa_1 = \kappa_2 + \kappa_3$ then $J$ has a pair of purely imaginary eigenvalues,
\item if $\kappa_1 < \kappa_2 + \kappa_3$ then $J$ has  eigenvalues with positive real parts (and thus, the positive equilibrium is unstable).
\end{enumerate}
Thus, on the bifurcation set (given by $\kappa_1 = \kappa_2 + \kappa_3$), apart from the negative real eigenvalue, there is a pair of imaginary eigenvalues ($\pm \omega i$ with $\omega=\sqrt{2\kappa_2\kappa_4}$). It was shown by direct calculation in \cite{banaji:boros:2022} that the first and the second focal values (also known as Poincar\'e--Lyapunov coefficients \cite{farkas:1994}, or Lyapunov coefficients \cite{kuznetsov:2004}) both vanish.  In the sequel, we show by providing a constant of motion that the system~\eqref{eq:ode} has a center whenever $\kappa_1 = \kappa_2 + \kappa_3$. Therefore, by a theorem of Lyapunov (see \cite[page 143 and Theorem 7.2.1]{farkas:1994}), in fact, the $k$th focal value vanishes for all $k\geq1$, and system \eqref{eq:ode} exhibits a vertical Andronov--Hopf bifurcation as $\kappa_1$ varies through  $\kappa_2+\kappa_3$.

\begin{theorem} \label{thm:main}
For the mass-action system \eqref{eq:ode} with $\kappa_1 = \kappa_2 + \kappa_3$, the following hold.
\begin{enumerate}[(i)]
\item
The function 
\begin{align*}
V(x,y,z) = \frac{\kappa_2}{2} (x-y+z)^2 + 2 \kappa_2 xy - 2 \kappa_4 \log (xy)
\end{align*}
is a constant of motion.
\item The stable manifold of $(x^*,y^*,z^*)$ is $\{(x,y,z)\in\mathbb{R}^3_{\geq 0}\colon x-y+z=0, xy=\frac{\kappa_4}{\kappa_2}\}$.
\item There exists an analytic two-dimensional invariant surface $\mathcal{M}$ in $\mathbb{R}^3_+$, composed of periodic orbits and the positive equilibrium. The $\omega$-limit set of any positive initial condition is either the positive equilibrium or one of the periodic orbits in $\mathcal{M}$.
\end{enumerate}
\end{theorem}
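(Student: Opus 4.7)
\medskip

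\noindent\textit{Proof plan.} The whole argument is driven by a two-dimensional reduction: setting $p := x - y + z$ and $q := xy$, a direct computation using $\kappa_1 = \kappa_2 + \kappa_3$ yields the closed subsystem
\begin{align*}
\dot p = 2\kappa_4 - 2\kappa_2 q, \qquad \dot q = \kappa_2\, p\, q.
\end{align*}
Part (i) is then a one-line chain-rule check, since $V = \tfrac{\kappa_2}{2}p^2 + 2\kappa_2 q - 2\kappa_4\log q$ depends only on $(p,q)$ and $\dot V = (\kappa_2 p)\dot p + (2\kappa_2 - 2\kappa_4/q)\dot q = 0$. For part (ii), observe that $V(p,q)$ is strictly convex in $(p,q)$ and attains its unique minimum at the equilibrium $(0,\kappa_4/\kappa_2)$ of the reduced system; its full preimage in $\mathbb{R}^3_{\geq 0}$ is exactly the stated curve $\Gamma$, which is therefore flow-invariant. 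Parameterising $\Gamma$ by $x$, the dynamics restricts to $\dot x = \kappa_3\kappa_4/\kappa_2 - \kappa_1 x^2$, whose global attractor on the admissible $x$-interval is $x^*$, so $\Gamma \subseteq W^s(x^*,y^*,z^*)$. Conversely, since $V$ is conserved and strictly greater than $V_{\min}$ off $\Gamma$, no trajectory outside $\Gamma$ can accumulate at the equilibrium.

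For part (iii), the plan is to construct a one-dimensional Poincar\'e return map on each level set $\{V = c\}$ and show that it is a uniform strict contraction. Substituting $u = \log q$, the $(p,q)$-system becomes the Newton equation $\ddot u = 2\kappa_2\kappa_4 - 2\kappa_2^2 e^u$ with coercive potential, so every non-equilibrium $(p,q)$-orbit is periodic with period $T(c)$ determined by $c = V$. Take the transverse section $\Sigma = \{p = 0,\ q > \kappa_4/\kappa_2\}$; the return map on $\Sigma \cap \{V = c\}$ is a map $P_c$ in the single remaining coordinate $x$. The key observation is that $x$ satisfies the Riccati equation
\begin{align*}
\dot x = -\kappa_1 x^2 + \kappa_1\, p\, x + \kappa_3\, q,
\end{align*}
so for two orbits on the same level set, the difference $\Delta := x_1 - x_2$ obeys $\dot\Delta = -\kappa_1\bigl(x_1 + x_2 - p\bigr)\Delta$. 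Integrating over one period gives $\int_0^{T(c)} p\,dt = \tfrac{1}{\kappa_2}[\log q]_0^{T(c)} = 0$, and a direct analysis of the Riccati equation confines $x(t)$ to a compact subset of $(0,\infty)$, yielding a uniform positive lower bound on $\int_0^{T(c)}(x_1 + x_2)\,dt$. Hence $P_c$ is a uniform strict contraction, and the Banach fixed point theorem furnishes a unique fixed point $x^*(c)$, which corresponds to a periodic orbit $\gamma_c$ to which every other orbit on $\{V = c\}$ converges.

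Defining $\mathcal{M} := \{(x^*,y^*,z^*)\} \cup \bigcup_{c > V_{\min}} \gamma_c$, the $\omega$-limit statement follows immediately from the contraction. For analyticity of $\mathcal{M}$: the implicit function theorem applied to $P_c(x) - x = 0$ (whose $x$-derivative at $x^*(c)$ is nonzero thanks to the strict contraction) gives $c \mapsto x^*(c)$ analytic, and flowing the resulting analytic curve in $\Sigma$ by the analytic polynomial vector field yields analyticity of $\mathcal{M}$ away from the equilibrium; near $(x^*,y^*,z^*)$, Lyapunov's analytic centre theorem---applicable since all focal values vanish thanks to the existence of $V$---supplies an analytic local piece that patches to the global extension. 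The main technical obstacle I anticipate is the contraction estimate for $P_c$: concretely, combining the exact cancellation $\int_0^{T(c)} p\,dt = 0$ with the trapping argument that produces a uniform positive lower bound on $x(t)$ along every orbit of a given level set. Both ingredients rely essentially on the special algebraic structure of system \eqref{eq:ode}.
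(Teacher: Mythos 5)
Your proposal is correct and follows essentially the same route as the paper: your closed planar subsystem in $(x-y+z,\,xy)$ is exactly the paper's Hamiltonian $(p,q)$-system (up to the substitution $q\mapsto\log q$), and part (iii) is proved there in the same way, via a transverse contraction on each level cylinder of $V$, a Poincar\'e return map with a unique attracting fixed point, the analytic implicit function theorem for $h$, and the Lyapunov center theorem near the equilibrium. The only substantive difference is that the paper works with $r=-\log x+\mathrm{const}$, in which your Riccati contraction becomes the one-sided bound $\partial \dot r/\partial r\le -2\alpha e^{q/2}$ and Gronwall yields a contraction rate that is uniform on each cylinder at once, whereas your raw-$x$ formulation still requires the trapping-interval step you flag (a forward-invariant compact $x$-interval for the Riccati flow) before the contraction is uniform and Banach's theorem applies.
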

\begin{proof}
We perform a change of coordinates which reveals the global orbit structure of \eqref{eq:ode}. The map 
\begin{align} \label{eq:Psi}
\Psi \colon \begin{pmatrix} x \\ y \\ z \end{pmatrix}
\mapsto
\begin{pmatrix} x-y+z\\ \log x+\log y -\log \frac{\kappa_4}{\kappa_2}\\ -\log x + \log\sqrt{\frac{\kappa_3\kappa_4}{\kappa_1\kappa_2}} \end{pmatrix}
\end{align}
is an analytic diffeomorphism between $\{(x,y,z) \in \mathbb{R}^3\colon x >0, y>0\}$ and $\mathbb{R}^3$. Its inverse is given by
\begin{align*}
\Psi^{-1} \colon \begin{pmatrix} p \\ q \\ r \end{pmatrix}
\mapsto
\begin{pmatrix} \frac{\alpha}{\kappa_1} e^{-r} \\ \frac{\alpha}{\kappa_3} e^{q+r} \\ p - \frac{\alpha}{\kappa_1} e^{-r} + \frac{\alpha}{\kappa_3} e^{q+r} \end{pmatrix},
\end{align*}
where $\alpha = \sqrt{\frac{\kappa_1 \kappa_3 \kappa_4}{\kappa_2}}$. When $\kappa_1 = \kappa_2 + \kappa_3$, the new coordinates $(p,q,r) = \Psi(x,y,z)$ evolve according to the differential equation
\begin{align}
\label{eq:ode_pqr}
\begin{split}
\dot p & = -2\kappa_4(e^q-1), \\
\dot q & = \kappa_2 p, \\
\dot r& = -\kappa_1 p+\alpha(e^{-r}-e^{q+r}).
\end{split}
\end{align}

Notice that $p$ and $q$ evolve independently of $r$. In fact, the $(p,q)$-system is Newtonian (i.e., $\ddot{q}+F(q)=0$), and thus, it is also Hamiltonian. Its Hamiltonian function is
\begin{align*}
H(p,q) = \frac{\kappa_2}{2} p^2+2\kappa_4(e^q-q - 1).
\end{align*}
Since $H\circ (\Psi_1, \Psi_2)$ differs from the function $V$ in (i) only by an additive constant, $V$ is indeed a constant of motion in the original coordinates, proving (i). Observe furthermore that the $r$-axis is invariant for \eqref{eq:ode_pqr}, and the flow there converges to the origin. Thus, the $r$-axis is the stable manifold, and in turn, this shows (ii).

The level sets of $H$ are closed, bounded curves which foliate the $(p,q)$-plane. Thus, in the $(p,q)$-system, the origin is a global center, i.e., each nonconstant solution is a periodic one whose orbit surrounds the origin, see \Cref{fig:hamiltonian} for a phase portrait.
\begin{figure}
    \centering
    \includegraphics[scale=0.55]{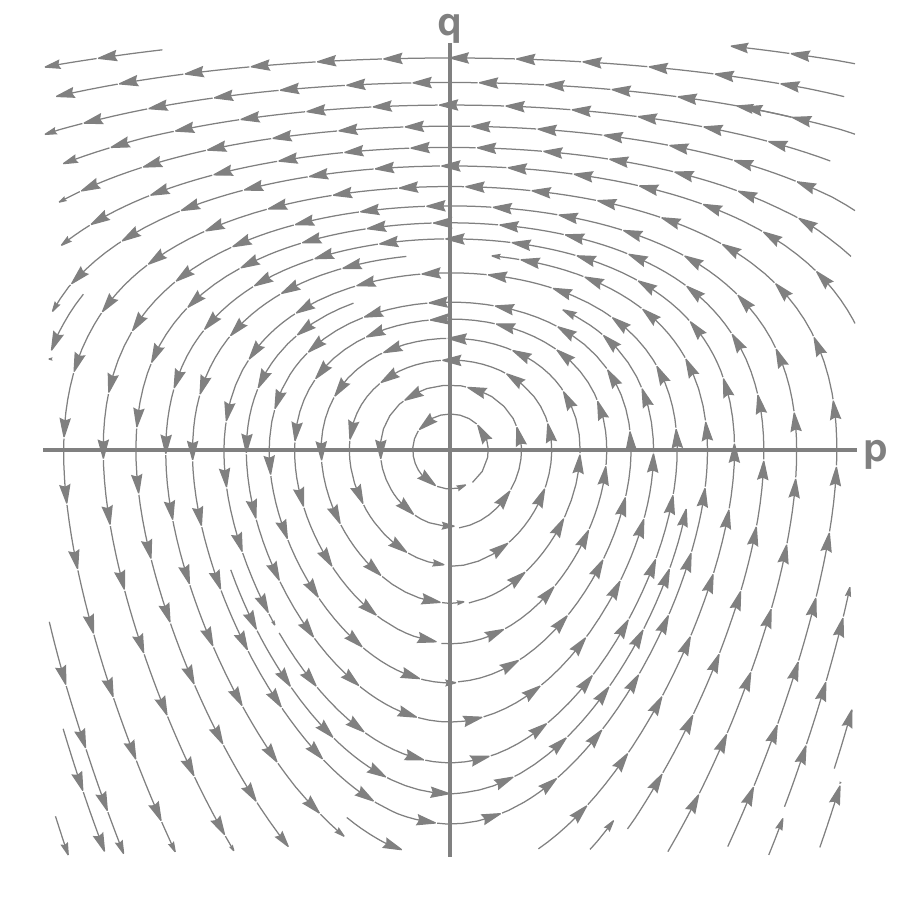}
    \caption{The phase portrait of $\dot p = -2\kappa_4(e^q-1)$, $\dot q = \kappa_2 p$. All nonconstant solutions are periodic.}
    \label{fig:hamiltonian}
\end{figure}
The function $H$ also provides an analytic constant of motion for system~\eqref{eq:ode_pqr}. Thus, the Lyapunov Center Theorem (see e.g., \cite{kelley:1969}, \cite[Theorem 3]{edneral:mahdi:romanovski:shafer:2012}, or \cite[Theorem 5.1.1]{romanovski:shafer:2016})   shows that the local center manifold at the equilibrium $(0,0,0)$ is unique, analytic, and filled with periodic orbits. In the following we show that this center manifold extends globally and attracts every solution.

For any $L > 0$, the cylinder $C_L = \{(p,q,r)\in\mathbb{R}^3 \colon H(p,q) = L\}$ is invariant for the differential equation \eqref{eq:ode_pqr}, and there exist $\underline{r} < 0 < \overline{r}$ such that
\begin{align*}
\text{$\dot{r}<0$ in $\{(p,q,r)\in C_L \colon r>\overline{r}\}$ and
$\dot{r}>0$ in $\{(p,q,r)\in C_L \colon r<\underline{r}\}$}
\end{align*}
hold. Therefore, the bounded cylinder $\{(p,q,r)\in C_L \colon \underline{r} \leq r\leq \overline{r}\}$ is forward invariant, and attracts all orbits on $C_L$. This shows, in particular, that all solutions of the differential equation \eqref{eq:ode_pqr} exist for all positive time and so the differential equation \eqref{eq:ode_pqr} defines a semiflow $\Phi^t$ on $\mathbb{R}^3$. On the other hand, the $(p,q)$ subsystem is associated with a flow $\widehat{\Phi}^t$ on the $(p,q)$-plane since all orbits are bounded and thus exist for all time. Both $\Phi^t$ and $\widehat{\Phi}^t$ are analytic (by the analytic dependence of solutions on initial conditions).

Next, we show that any two solutions starting above each other on a cylinder $C_L$ approach each other. Let us denote the r.h.s.\ of \eqref{eq:ode_pqr} as $f(p,q,r)$, and accordingly, $f_3(p,q,r)$ equals $-\kappa_1 p + \alpha(e^{-r}-e^{q+r})$. Note that
\begin{align} \label{eq:partialbound}
\frac{\partial f_3}{\partial r} = -\alpha(e^{-r}+e^{q+r}) \leq -2\alpha e^{q/2}.
\end{align}
For a fixed $L>0$, let $(p,q,r_1), (p,q,r_2) \in C_L$ with $r_1 < r_2$. Further, let $r_i(t) = \Phi^t_3(p,q, r_i)$, the third component of the solution. Then $r_1(t) < r_2(t)$ for all $t>0$ and, by the Mean Value Theorem, 
\begin{align*}
\dot  r_2(t) - \dot  r_1(t) &= 
 f_3(p(t), q(t),  r_2(t)) -  f_3(p(t), q(t), r_1(t)) =\\
  &= (r_2(t) - r_1(t)) \frac{\partial f_3}{\partial r}(p(t),q(t),\widetilde r(t))
\end{align*}
with $r_1(t) \leq \widetilde{r}(t) \leq r_2(t)$. By \eqref{eq:partialbound}, it follows that
\begin{align*}
\dot{r}_2(t)-\dot{r}_1(t) \leq -K (r_2(t) - r_1(t))
\end{align*}
holds with $K = 2\alpha e^{\overline{q}/2}$, where $\overline{q}$ is the negative solution of $H(0,q)=L$. Thus, by the Gronwall Lemma,
\begin{align}\label{eq:contraction_flow}
|r_2(t) - r_1(t)| \leq e^{-K t} |r_2 - r_1|.
\end{align}

Next, we define the Poincar\'e section
\begin{align*}
\Sigma = \{(p,q,r) \in \mathbb{R}^3\colon p=0, q > 0\},
\end{align*}
and a Poincar\'e map $P\colon \Sigma \to \Sigma$ as follows. For any $q> 0$, let $\ell_q$ be the line $\{0\} \times \{q\} \times \mathbb{R}$. Then $(\ell_q)_{q>0}$ is a foliation of $\Sigma$. Associated with each $q>0$ is a minimal positive period $\tau_q$ such that $\widehat{\Phi}^{\tau_q}(0,q) = (0,q)$, i.e., $\Phi^{\tau_q}(\ell_q) \subseteq \ell_q$. By the analytic Implicit Function Theorem, $\tau_q$ is an analytic function of $q$. We can thus define the first return map $P$ by $P(0,q,r) = \Phi^{\tau_q}(0,q,r)$, and since $\Phi$ and $\tau_q$ are analytic, $P$ is analytic on $\Sigma$.

We define the analytic function $R\colon (0,\infty) \times \mathbb{R} \to \mathbb{R}$ by  $P(0,q,r) = (0,q,R(q,r))$. For any fixed $q>0$, by substituting $t=\tau_q$ into \eqref{eq:contraction_flow}, we obtain
\begin{align}\label{eq:contraction_R}
|R(q,r_2)-R(q,r_1)| \leq e^{-K\tau_q}|r_2-r_1|,
\end{align}
showing that $R(q,\cdot)$ is a contraction. Hence, for each $q > 0$ the function $R(q,\cdot)\colon \mathbb{R} \to \mathbb{R}$ has a unique fixed point $h(q)$. Every orbit of $P$ starting on the line $\ell_q$ converges to $(0,q,h(q))$ which corresponds to a periodic orbit of \eqref{eq:ode_pqr} with period $\tau_q$. Additionally, since $\left|\frac{\partial R}{\partial r}\right|\leq e^{-K\tau_q}<1$ follows from \eqref{eq:contraction_R}, the analytic Implicit Function Theorem applies to $R(q,h(q)) = h(q)$, and thus, $h$ is analytic for $q>0$.

Finally, applying $\Phi^t$ to the graph of $h$, we obtain the invariant surface $\mathcal{C}=\{\Phi^t(0,q,h(q)): q > 0, t \in \mathbb{R}\}\cup\{(0,0,0)\}$, consisting entirely of periodic orbits of the flow (together with the equilibrium).  Near the origin, $\mathcal{C}$ coincides with the local center manifold, hence, $\mathcal{C}$ is analytic there by the Lyapunov Center Theorem. That $\mathcal{C}$ is analytic away from the origin follows by a straightforward argument that uses the analyticity of $\Phi$, $\widehat{\Phi}$, and $h$.

Setting $\mathcal{M} = \Psi^{-1}(\mathcal{C})$ and recalling that $\Psi$ is an analytic diffeomorphism complete the proof of statement (iii).
\end{proof}

In the next theorem, we describe how the closure of the surface $\mathcal{M}$ intersects the boundary of the nonnegative orthant $\mathbb{R}^3_{\geq0}$. We call a solution $t \mapsto (x(t),y(t),z(t))$ \emph{complete} if it is defined for all $t \in \mathbb{R}$.

\begin{theorem} \label{thm:intersect_boundary}
For the invariant surface $\mathcal{M}$, obtained in \Cref{thm:main}, the intersection $\overline{\mathcal{M}} \cap \partial\mathbb{R}^3_{\geq0}$ is the parametric curve
\begin{align*}
\sqrt{\frac{2\kappa_4}{\kappa_3}}
\left(0,\frac{\varphi(\tau)}{\Phi(\tau)}, \frac{\varphi(\tau)}{\Phi(\tau)} + \tau\right) \text{ for } \tau \in \mathbb{R},
\end{align*}
where $\varphi(\tau) = \frac{1}{\sqrt{2\pi}}e^{-\frac{\tau^2}{2}}$ and $\Phi(\tau) = \int_{-\infty}^\tau \varphi(s) \mathrm{d}s$. Up to the rescaling $\tau = \sqrt{2\kappa_3\kappa_4}t$ of time, this is the only complete solution of \eqref{eq:ode} on the boundary of $\mathbb{R}^3_{\geq0}$.
\end{theorem}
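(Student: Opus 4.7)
The proof splits into two tasks: classify all complete solutions of \eqref{eq:ode} that lie on $\partial\mathbb{R}^3_{\geq 0}$, and then show that $\overline{\mathcal{M}}\cap\partial\mathbb{R}^3_{\geq 0}$ coincides with the unique such solution.

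\textbf{Classification of complete boundary solutions.} The face $\{z=0\}$ is eliminated at once, since $\dot z|_{z=0}=2\kappa_4>0$. On the invariant face $\{y=0\}$ the identity $x+z=2\kappa_4 t+c$ becomes negative at sufficiently negative $t$ regardless of $c$, so no trajectory on this face remains in $\mathbb{R}^3_{\geq 0}$ for all $t$; the same sign argument disposes of the lower-dimensional strata through the $z$-axis. On the invariant face $\{x=0\}$ the reduction gives $z-y=2\kappa_4 t+c$. Absorbing $c$ by a time shift and substituting $z=y+2\kappa_4 t$ into $\dot y=-\kappa_3 yz$ yields a Bernoulli equation which the reciprocal $u=1/y$ linearises to $\dot u-2\kappa_3\kappa_4 t\,u=\kappa_3$. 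Rescaling $\tau=\sqrt{2\kappa_3\kappa_4}\,t$ and using $\Phi'=\varphi$, the integrating factor $e^{-\tau^2/2}$ produces $u=e^{\tau^2/2}\bigl(\tilde C+\sqrt{\pi\kappa_3/\kappa_4}\,\Phi(\tau)\bigr)$ for some constant $\tilde C$. The two conditions $u>0$ (to keep $y$ finite) and $y+\sqrt{2\kappa_4/\kappa_3}\,\tau\geq 0$ (to keep $z\geq 0$) force $\tilde C=0$: the first requires $\tilde C\geq 0$, and the second rules out $\tilde C>0$ because then $y\to 0$ superexponentially as $\tau\to-\infty$ while $\sqrt{2\kappa_4/\kappa_3}\,\tau\to-\infty$. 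Inverting gives exactly the stated parametric formula, and positivity of $z$ for all $\tau$ follows from the Mills ratio expansion $\varphi(\tau)/\Phi(\tau)=|\tau|+|\tau|^{-1}+O(|\tau|^{-3})$ as $\tau\to-\infty$.

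\textbf{Closure equals the Gaussian curve.} The system \eqref{eq:ode} defines a continuous semiflow on $\mathbb{R}^3_{\geq 0}$, so forward invariance of $\mathcal{M}$ passes to $\overline{\mathcal{M}}$ and $\overline{\mathcal{M}}\cap\partial\mathbb{R}^3_{\geq 0}$ is a closed forward-invariant subset of the boundary. Each point of $\mathcal{M}$ belongs to a complete (periodic) orbit, so a limit point of $\overline{\mathcal{M}}$ inherits a backward trajectory that must remain in $\overline{\mathcal{M}}\subseteq\mathbb{R}^3_{\geq 0}$; by the classification this precludes limit points on $\{y=0\}$ and confines $\overline{\mathcal{M}}\cap\partial\mathbb{R}^3_{\geq 0}$ to the Gaussian orbit. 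For non-emptiness I would use the identity $xy=(\kappa_4/\kappa_2)\,e^{q(t)}$ (immediate from $\Psi$), valid along every trajectory. On the periodic orbit in $\mathcal{C}$ at Hamiltonian level $L$, the $q$-coordinate attains a minimum that tends to $-\infty$ as $L\to\infty$, so $\inf xy\to 0$ on these orbits. A subsequence of minimizers accumulates at a boundary point $p_\infty\in\overline{\mathcal{M}}\cap\{xy=0\}$; by the preceding paragraph $p_\infty$ lies on $\{x=0\}$ and hence on the Gaussian orbit, and forward invariance of $\overline{\mathcal{M}}$ then spreads $p_\infty$ into the full Gaussian curve.

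\textbf{Main obstacle.} The delicate step is the non-emptiness argument: one must ensure that the chosen minimizers converge in $\mathbb{R}^3_{\geq 0}$ (rather than escaping to infinity in some coordinate) and that the limit lands on $\{x=0\}$ rather than on $\{y=0\}$. The cleanest route I see is to quantify the $r$-dynamics along the large periodic orbits, proving $\sup r\to+\infty$ as $L\to\infty$ in $(p,q,r)$-coordinates; via $x=(\alpha/\kappa_1)e^{-r}$ this gives $\inf x\to 0$ directly. The key estimate is a balance on the $\dot r$-equation exploiting the long sojourn near the lower turning point of $q$, where the push-up term $\alpha e^{-r}$ dominates and drives $r$ to grow without bound. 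Everything else is routine linear-ODE work or a soft consequence of the forward invariance of $\overline{\mathcal{M}}$.
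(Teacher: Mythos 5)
Your classification of the complete boundary solutions is correct and matches the paper's in substance: the faces $\{z=0\}$ and $\{y=0\}$ are ruled out exactly as in the paper, and on $\{x=0\}$ your Bernoulli/integrating-factor computation reproduces the paper's general solution $y=\varphi/(\Phi+C)$, $z=y+\tau$ and correctly isolates $C=0$ as the only complete solution staying in $\mathbb{R}^3_{\geq0}$. The containment direction of the closure statement (complete orbits in $\overline{\mathcal{M}}\cap\partial\mathbb{R}^3_{\geq0}$ must lie on the Gaussian orbit) is also the paper's argument.

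The genuine gap is the non-emptiness step, which you correctly flag as the main obstacle but do not close, and your proposed route would not close it even if the estimate $\sup r\to+\infty$ were proved. Knowing $\inf x\to 0$ along large periodic orbits only tells you there are points of $\mathcal{M}$ with small $x$; to extract an accumulation point on $\partial\mathbb{R}^3_{\geq0}$ you need the $y$- and $z$-coordinates at those points to stay in a compact set, and nothing in your argument controls them (indeed, on these orbits $y$ and $z$ do become arbitrarily large elsewhere). Similarly, your minimizer-of-$q$ sequence satisfies $p=0$, i.e.\ $y=x+z$, which does not separate the cases $x\to0$ versus $y\to0$ and does not bound the sequence. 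The paper resolves both difficulties with two cheap observations you are missing. First, $\tfrac{\mathrm{d}}{\mathrm{d}t}(yz)<0$ whenever $yz\geq\tfrac{2\kappa_4}{\kappa_3}$ (using $\kappa_1=\kappa_2+\kappa_3$), so every periodic orbit, hence all of $\mathcal{M}$, satisfies $yz\leq\tfrac{2\kappa_4}{\kappa_3}$. Second, instead of taking $p=0$, take points of $\mathcal{C}$ with $p_n=-1$ and $q_n\to-\infty$: then $y_n=x_n+z_n+1\geq1$, so $x_ny_n=(\kappa_4/\kappa_2)e^{q_n}\to0$ forces $x_n\to0$, the bound $y_nz_n\leq\tfrac{2\kappa_4}{\kappa_3}$ forces $z_n\leq\tfrac{2\kappa_4}{\kappa_3}$, and hence $y_n=1+x_n+z_n$ is bounded too. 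The sequence therefore has an accumulation point on a compact segment of $\{x=0\}$ with $y\geq1$, which lands you on the correct face with no escape to infinity. Without some substitute for the invariant bound on $yz$, your proposal does not establish that $\overline{\mathcal{M}}$ meets the boundary at all.
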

\begin{proof}
First, observe that $\frac{\mathrm{d}}{\mathrm{d}t} (yz)< 0$ whenever $yz \geq\frac{2\kappa_4}{\kappa_3}$. Indeed, 
\begin{align*}
\frac{\mathrm{d}}{\mathrm{d}t} \log(yz) =  \frac{\dot y}{y} + \frac{\dot z}{z} &= \kappa_2 x - \kappa_3 z - \kappa_3 y - \kappa_1 x + \frac{2 \kappa_4}{z} = \\
&= - \kappa_3 (x+z) + \frac{2 \kappa_4- \kappa_3 yz}{z}  < 0,
\end{align*}
where we used $\kappa_1 = \kappa_2 + \kappa_3$ and $yz\geq\frac{2\kappa_4}{\kappa_3}$. As a consequence, for any point $(x,y,z) \in \mathcal{M}$ we have $yz\leq \frac{2\kappa_4}{\kappa_3}$.

Next, we show that $\overline{\mathcal{M}}$ intersects the facet $\mathcal{F}=\{(x,y,z) \in \mathbb{R}^3_{\geq0} \colon x=0\}$. To this end, take a sequence of points $(p_n,q_n,r_n)_{n\geq0}\subseteq\mathcal{C}$ such that $p_n = -1$ and $\lim_{n\to\infty}q_n =-\infty$, where $\mathcal{C}$ is the invariant surface of the differential equation \eqref{eq:ode_pqr}, constructed in the proof of \Cref{thm:main}, foliated by periodic orbits. Then define $(x_n,y_n,z_n) = \Psi^{-1}(p_n,q_n,r_n) \in \mathcal{M}$, where $\Psi$ is given by \eqref{eq:Psi}. Since $p_n=-1$, it follows that $y_n = x_n + z_n + 1$, and consequently, $y_n\geq 1$. Since $\lim_{n\to\infty}q_n =-\infty$, we obtain that $\lim_{n\to\infty} x_n y_n = 0$. Hence, $\lim_{n\to\infty} x_n = 0$ and $\lim_{n\to\infty} (y_n - z_n) = 1$. Taking also into account that $(x,y,z) \in \mathcal{M}$ implies $yz \leq \frac{2\kappa_4}{\kappa_3}$, the sequence $(x_n,y_n,z_n)_{n\geq0}$ has an accumulation point on the line segment $\{(0,y,z)\in\mathbb{R}^3_{\geq0} \colon y-z=1 \text{ and }yz\leq \frac{2\kappa_4}{\kappa_3}\}$.

Since $\mathcal{M}$ consists of orbits of complete solutions, so does the closure of $\mathcal{M}$. Therefore, since $\overline{\mathcal{M}} \subseteq \mathbb{R}^3_{\geq0}$, there is a complete solution in $\mathbb{R}^3_{\geq0}$ through the accumulation point that we found in the previous paragraph. Since the set $\mathcal{G}_1 = \{(x,y,z) \in \mathbb{R}^3 \colon x=0, y\geq0\}$ is invariant, this complete solution lies in $\mathcal{G}_1 \cap \mathbb{R}^3_{\geq0}$, i.e., in $\mathcal{F}$.

Next, we investigate the dynamics on $\mathcal{G}_1$. To ease the notation, we divide both $y$ and $z$ by $\sqrt{\frac{2\kappa_4}{\kappa_3}}$. After also rescaling time ($\tau = \sqrt{2\kappa_3\kappa_4}t$), the differential equation \eqref{eq:ode} on $\mathcal{G}_1$ becomes
\begin{align}\label{eq:ode_yz_facet}
\begin{split}
\dot{y} &= -yz, \\
\dot{z} &= -yz +1.
\end{split}
\end{align}
The general solution to \eqref{eq:ode_yz_facet}, up to time shift, is
\begin{align}\label{eq:facet_solution}
\begin{split}
y(\tau) &= \frac{\varphi(\tau)}{\Phi(\tau)+C}, \\
z(\tau) &= \frac{\varphi(\tau)}{\Phi(\tau)+C} + \tau,
\end{split}
\end{align}
where $-1 < C \leq \infty$ (the limit case $C = \infty$ gives the complete solution $y(\tau) = 0$, $z(\tau) = \tau$ along the $z$-axis). For $-1<C<0$ the solution \eqref{eq:facet_solution} is defined only in the interval $(\tau_0, \infty)$, where $\tau_0$ is given by $\Phi(\tau_0)+C=0$, and thus, the solution is not complete.  For $C>0$, the solution \eqref{eq:facet_solution} is defined for all $\tau\in\mathbb{R}$, however, since $\lim_{\tau\to -\infty}z(\tau)=-\infty$, it is a complete solution in $\mathcal{G}_1$, but not in $\mathcal{F}$. Consequently, the only complete solution in $\mathcal{F}$ is \eqref{eq:facet_solution} with $C=0$. See \Cref{fig:yz_plane} for the orbits of the solutions \eqref{eq:facet_solution} for different values of $C$.

On the invariant set $\mathcal{G}_2=\{(x,y,z) \in \mathbb{R}^3 \colon x\geq0, y=0\}$, the differential equation~\eqref{eq:ode} takes the form
\begin{align*}
\dot{x} &= \kappa_1 xz, \\
\dot{z} &= -\kappa_1 xz +2\kappa_4.
\end{align*}
Since $x(t)+z(t)=2\kappa_4 t+ C$ (for some $C\in\mathbb{R}$), for every solution with $(x(0),z(0)) \in \mathbb{R}^2_{\geq0}$ there exists a time $t^*<0$ such that $z(t^*)<0$. Thus, there is no complete solution in $\mathcal{G}_2 \cap \mathbb{R}^3_{\geq0}$.

Finally, since $\dot{z}>0$ for \eqref{eq:ode} whenever $z=0$, the closure of $\mathcal{M}$ cannot intersect $\{(x,y,z) \in \mathbb{R}^3_{\geq0} \colon z=0\}$. This concludes the proof of the theorem. (The shape of the global center manifold $\mathcal{M}$ is shown in \Cref{fig:M}.)
\end{proof}

\begin{figure}[b]
    \centering
    \includegraphics[scale=0.55]{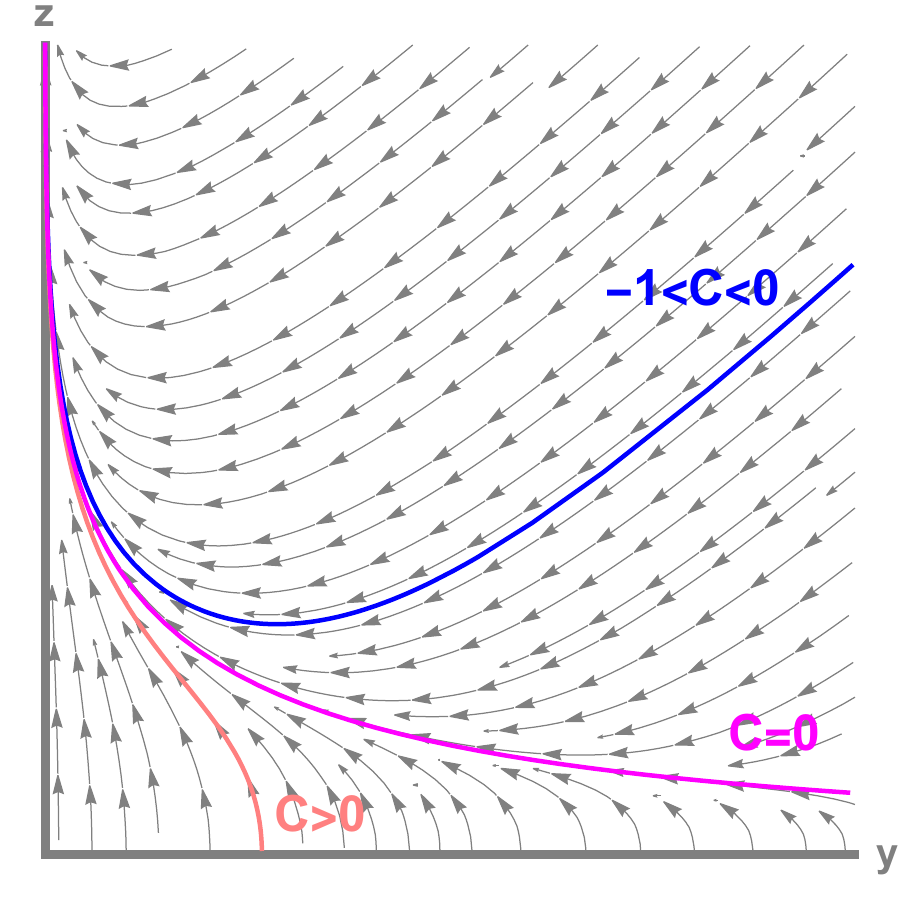}
    \caption{The phase portrait of the differential equation \eqref{eq:ode_yz_facet}, along with three highlighted trajectories (for $-1<C<0$, $C=0$, $C>0$). The trajectory shown in magenta is the only one that corresponds to a complete solution that lies entirely in the boundary of the nonnegative orthant $\mathbb{R}^3_{\geq0}$.}
    \label{fig:yz_plane}
\end{figure}

\begin{figure}[b]
    \centering
    \includegraphics[scale=0.28]{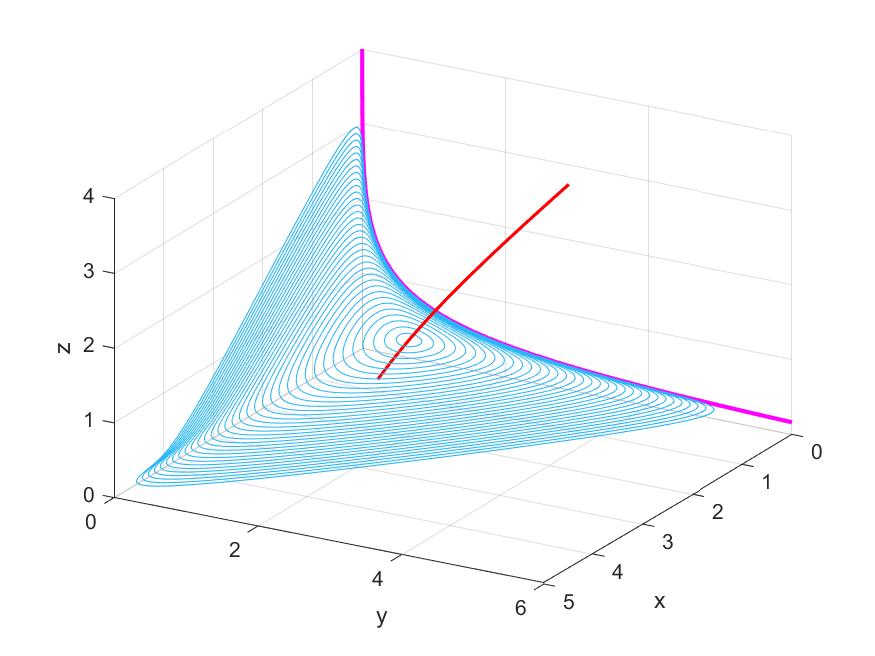}
    \caption{The periodic orbits of the differential equation \eqref{eq:ode} (shown in blue), the stable manifold of the unique positive equilibrium (shown in red), and the intersection of the closure of the center manifold with the boundary of the positive orthant (shown in magenta).}
    \label{fig:M}
\end{figure}
\clearpage
\section{Discussion}

We have shown in \Cref{thm:main} that the positive equilibrium of \eqref{eq:ode} is a center when $\kappa_1 = \kappa_2 + \kappa_3$. In fact, we provided a constant of motion $V$, and proved the existence of a global center manifold $\mathcal{M}$ that attracts all positive solutions, albeit we have no explicit formula for $\mathcal{M}$. The periodic orbits are obtained as the intersection of the level sets of $V$ with $\mathcal{M}$. On the other hand, a frequent situation in the literature is when the center manifold $\mathcal{M}$ is known explicitly, but the function $V$ is not (although its restriction to $\mathcal{M}$ may be known). In some cases (e.g.\ for system~\eqref{eq:9reactions}), both $V$ and $\mathcal{M}$ are known explicitly. For some examples of centers on center manifolds, see e.g.\ \cite{edneral:mahdi:romanovski:shafer:2012}, \cite{garcia:maza:shafer:2019}, or \cite[Section 5.2]{romanovski:shafer:2016}.

As was discussed in \Cref{sec:summary}, there are $86$ dynamically nonequivalent three-species four-reaction bimolecular mass-action systems that admit a nondegenerate Andronov--Hopf bifurcation. Of those, $31$ also admit a degenerate Andronov--Hopf bifurcation (i.e., a vanishing first focal value) on an exceptional subset of the bifurcation set, see \cite{banaji:boros:2022}. However, in all $31$ cases, the second focal value is nonzero on this exceptional set, and thus, degenerate Andronov--Hopf bifurcations of codimension greater than two are impossible. Thus, system \eqref{eq:ode} stands out in two ways: the Andronov--Hopf bifurcation is degenerate everywhere on the bifurcation set; and additionally all focal values vanish, leading to a center through a  bifurcation of infinite codimension.

We conclude with two open questions about system \eqref{eq:ode}:
\begin{enumerate}[(a)]
\item For $\kappa_1 > \kappa_2 + \kappa_3$, is the positive equilibrium \emph{globally asymptotically stable}?
\item For $\kappa_1 < \kappa_2 + \kappa_3$, are all solutions outside the stable manifold of the positive equilibrium \emph{unbounded}?
\end{enumerate}

\bibliographystyle{abbrv}
\bibliography{biblio}

\end{document}